\newcommand{\Z}{{\mathbb Z}}
\newcommand{\Q}{{\mathbb Q}}
\newcommand{\R}{{\mathbb R}}
\newcommand{\F}{{\mathbb F}}
\newcommand{\BP}{{\mathbb P}}
\newcommand{\To}{\longrightarrow}
\newcommand{\Pic}{\operatorname{Pic}}
\newcommand{\Div}{\operatorname{Div}}
\newcommand{\inj}{\hookrightarrow}
\newcommand{\diw}{\operatorname{div}}
\newcommand{\CO}{{\mathcal O}}
\newtheorem{Theorem}{Theorem}
\newtheorem{Lemma}[Theorem]{Lemma}
\newtheorem{Proposition}[Theorem]{Proposition}
\theoremstyle{definition}
\newtheorem{Remark}[Theorem]{Remark}
\numberwithin{equation}{section}
\begin{document}

\title{On a Problem of Hajdu and Tengely}

\author{Samir Siksek}
\address{Institute of Mathematics,
         University of Warwick,
         Coventry CV4 7AL, United Kingdom}
\email{s.siksek@warwick.ac.uk}

\author{Michael Stoll}
\address{Mathematisches Institut,
         Universit\"at Bayreuth,
         95440 Bayreuth, Germany.}
\email{Michael.Stoll@uni-bayreuth.de}

\keywords{}
\subjclass[2000]{Primary 11D41, Secondary 11G30, 14G05, 14G25}

\date{29 May, 2010}

\begin{abstract}
  We prove a result that finishes the study of primitive arithmetic
  progressions consisting of squares and fifth powers that was
  carried out by Hajdu and Tengely in a recent paper:
  The only arithmetic progression in coprime integers of the form
  $(a^2, b^2, c^2, d^5)$ is $(1, 1, 1, 1)$.
  For the proof, we first reduce the problem to that of determining
  the sets of rational points on three specific hyperelliptic curves
  of genus~4. A 2-cover descent computation shows that there are no
  rational points on two of these curves. We find generators for a
  subgroup of finite index of the Mordell-Weil group of the last curve.
  Applying Chabauty's method, we prove that
  the only rational points on this curve are the obvious ones.
\end{abstract}

\maketitle


\section{Introduction}

Euler (\cite[pages 440 and 635]{Dickson})
proved Fermat's claim that four distinct squares cannot
form an arithmetic progression. Powers in arithmetic progressions are
still a subject of current interest.
For example, Darmon and Merel \cite{DM} proved that the only solutions
in coprime integers to the Diophantine equation $x^n+y^n=2z^n$
with $n \geq 3$ satisfy $xyz=0$ or $\pm 1$. This shows that
there are no non-trivial three term arithmetic progressions
consisting of $n$-th powers with $n \geq 3$.
The result of Darmon and Merel is far from elementary; it needs
all the tools used in Wiles' proof of Fermat's Last Theorem
and more.

An arithmetic
progression $(x_1,x_2,\ldots,x_k)$ of integers is said to be {\em primitive}
if the terms are coprime, i.e., if $\gcd(x_1,x_2)=1$. Let $S$ be a finite subset of integers
$\geq 2$. Hajdu \cite{Hajdu} showed that if
\begin{equation}\label{eqn:ap}
(a_1^{\ell_1},\ldots, a_k^{\ell_k})
\end{equation}
is a non-constant primitive arithmetic progression with $\ell_i \in S$,
then $k$ is bounded by some (inexplicit) constant $C(S)$.
Bruin, Gy\H{o}ry, Hajdu and Tengely \cite{BGHT} showed
that for any $k \geq 4$ and any $S$, there are only finitely
many primitive arithmetic progressions of the form \eqref{eqn:ap},
with $\ell_i \in S$. Moreover, for $S=\{2,3\}$ and $k \geq 4$,
they showed that $a_i = \pm 1$ for $i=1,\ldots,k$.

A recent paper of Hajdu and Tengely \cite{HT} studies primitive
arithmetic progressions \eqref{eqn:ap} with exponents
belonging to $S=\{2,n\}$
and $\{3,n\}$. In particular, they show that any primitive non-constant
arithmetic progression \eqref{eqn:ap} with exponents $\ell_i \in \{2,5\}$
has $k \leq 4$. Moreover, for $k=4$ they show that
\begin{equation}\label{eqn:l}
(\ell_1,\ell_2,\ell_3,\ell_4) =(2,2,2,5) \quad \text{or}
\quad (5,2,2,2).
\end{equation}
Note that if $(a_i^{\ell_i} : i=1,\ldots,k)$ is
an arithmetic progression, then so is the reverse
progression $(a_i^{\ell_i}: i=k,k-1,\ldots,1)$.
Thus there is really only one case left open by Hajdu and Tengely,
with exponents $(\ell_1,\ell_2,\ell_3,\ell_4) =(2,2,2,5)$.
This is also mentioned as Problem~11
in a list of 22 open problems recently compiled by
Evertse and Tijdeman~\cite{LeidenProblem}.
In this paper we deal with this case.
\begin{Theorem} \label{Thm}
  The only arithmetic progression in coprime integers of the form
  \[ (a^2, b^2, c^2, d^5) \]
  is $(1, 1, 1, 1)$.
\end{Theorem}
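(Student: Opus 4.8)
The plan is to recast the arithmetic progression condition as two Diophantine relations and then follow the route advertised in the abstract. Writing the common difference as $\delta$, the statement that $(a^2,b^2,c^2,d^5)$ is in arithmetic progression is equivalent to $c^2 = 2b^2 - a^2$ together with $d^5 = 3b^2 - 2a^2 = 2c^2 - b^2$, with $\gcd(a,b)=1$. I would treat the first relation as a conic: since $a^2 + c^2 = 2b^2$ has the rational point $(1,1,1)$, its primitive integral solutions admit the classical parametrisation $(a,b,c) = \bigl(p^2 - 2pq - q^2,\; p^2 + q^2,\; \pm(p^2 + 2pq - q^2)\bigr)$ with $\gcd(p,q)=1$, up to a bounded content that I would control by a short case analysis according to the parities of $p,q,d$ and divisibility by the primes dividing the relevant discriminants. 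Substituting into $d^5 = 2c^2 - b^2$ then yields
\begin{equation*}
 d^5 = p^4 + 8p^3 q + 2p^2 q^2 - 8 p q^3 + q^4 =: q(p,q),
\end{equation*}
so the problem becomes that of deciding when this binary quartic represents a fifth power.

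The quartic $q$ is irreducible over $\Q$ but factors over $K = \Q(\sqrt3)$ as a product $Q_1(p,q)\,Q_2(p,q)$ of Galois-conjugate quadratic forms, with $Q_1 = p^2 + (4 - 2\sqrt3)pq - q^2$. As $K$ has class number one and an explicitly known unit group, I would run a descent over $K$: the ideals generated by $Q_1(p,q)$ and $Q_2(p,q)$ are coprime away from a fixed finite set of primes and have product a fifth power, so $Q_1(p,q) = u\,\gamma^5$ for $\gamma \in \CO_K$ and $u$ ranging over a finite set of representatives of $\CO_K^\ast/(\CO_K^\ast)^5$ compatible with the norm and sign conditions. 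Writing $\gamma = g + h\sqrt3$ and separating rational and $\sqrt3$-parts gives $p^2 + 4pq - q^2 = A_u(g,h)$ and $-2pq = B_u(g,h)$ with $A_u,B_u$ binary quintics; eliminating $(p,q)$ through $(p^2+q^2)^2 = (p^2 - q^2)^2 + (2pq)^2$ and recalling that $b$ is proportional to $p^2 + q^2$ then produces, for each admissible $u$, a hyperelliptic curve
\begin{equation*}
 C_u : \quad Y^2 = \bigl(A_u + 2B_u\bigr)^2 + B_u^2,
\end{equation*}
i.e. a model $Y^2 = f_u(x)$ with $\deg f_u = 10$, hence generically of genus $4$. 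The case analysis should leave exactly three such curves $C_1,C_2,C_3$, on which the trivial progression $(1,1,1,1)$ corresponds to the obvious rational points; the theorem is then equivalent to the assertion that these are the only rational points.

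For two of the three curves I expect $C_u(\Q) = \emptyset$, and I would prove this by a $2$-cover descent: computing the fake $2$-Selmer set of $C_u$ and finding it empty exhibits an everywhere-unramified cover with no point over some completion, hence a local obstruction forcing $C_u(\Q)=\emptyset$. The remaining curve $C$ has no such obstruction and does carry the obvious points, so here I would use Chabauty's method: first exhibit explicit generators of a finite-index subgroup of the Mordell--Weil group $J(\Q)$ of its Jacobian and verify that $\operatorname{rank} J(\Q) \le 3$, which is strictly below the genus $g=4$; then, at a suitable prime of good reduction, bound $\#C(\Q)$ by Coleman's integrals and check that the bound is attained by the known points, so that $C(\Q)$ consists exactly of these. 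Pulling the finitely many rational points back through the parametrisation and the descent then forces $a^2 = b^2 = c^2 = d^5 = 1$, which is the progression $(1,1,1,1)$.

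The hard part will be this last curve. Genus $4$ leaves little slack for Chabauty, so everything hinges on \emph{(i)} establishing the rank bound $\operatorname{rank} J(\Q) \le 3$ and producing enough independent points to generate a finite-index subgroup of $J(\Q)$ — a delicate Mordell--Weil computation for a high-genus Jacobian — and \emph{(ii)} carrying out the $p$-adic Chabauty--Coleman analysis sharply enough that the point count matches the obvious points exactly. A secondary difficulty is the descent bookkeeping: pinning down the correct finite list of unit cosets and content cases, so that one lands on precisely the three curves $C_1,C_2,C_3$ without discarding any genuine solution.
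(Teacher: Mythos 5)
Your construction is algebraically correct and genuinely different from the paper's. Writing the progression conditions as $a^2+c^2=2b^2$ and $d^5=2c^2-b^2$, parametrising the conic, and substituting does give $d^5=p^4+8p^3q+2p^2q^2-8pq^3+q^4$, and this quartic does factor over $\Q(\sqrt{3})$ as $\bigl(p^2+(4-2\sqrt{3})pq-q^2\bigr)\bigl(p^2+(4+2\sqrt{3})pq-q^2\bigr)$; since $\Q(\sqrt{3})$ has class number one and fundamental unit $2+\sqrt{3}$ of norm $1$, the descent $Q_1(p,q)=(2+\sqrt{3})^j\gamma^5$ with finitely many $j$ is valid, and your elimination $b^2=(A_u+2B_u)^2+B_u^2$ checks out. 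The paper proceeds differently: it factors $b^2-2c^2=-d^5$ directly over $\Z[\sqrt{2}]$, writing $b+c\sqrt{2}=(1+\sqrt{2})^j(u+v\sqrt{2})^5$, and uses the remaining condition $a^2=2b^2-c^2$ as the hyperelliptic equation, so no conic parametrisation is needed and the curves are different from yours (in your setup the trivial solution lands on the $j=0$ curve; in the paper's, on $C_1$). One structural weakness of your route, absent from the paper's: your elimination only encodes the necessary condition $(p^2+q^2)^2=(p^2-q^2)^2+(2pq)^2$, so rational points on your curves need not come from rational $(p,q)$; this is logically harmless but can create extra rational points that inflate the Mordell--Weil rank or clutter the endgame.

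The genuine gap is that everything after the construction is an expectation rather than an argument, and nothing in the paper validates it, because your three curves are not the paper's three curves. That two of them have empty fake 2-Selmer sets, and that the third has rank at most $3$ with findable generators of a finite-index subgroup of $J(\Q)$, are computational outcomes that are not automatic and may simply fail: a curve with no rational points can still have a nonempty 2-Selmer set, and there is no a priori reason the rank of a genus-4 Jacobian stays below the genus. The theorem is only proved once these computations are actually carried out for your specific curves, which your proposal does not do (nor can the paper's computations be borrowed, since the number fields and Jacobians involved are different).

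Even in the favourable case, your endgame is too optimistic as stated. Chabauty--Coleman gives at most one rational point per residue disc at which the annihilating differential does not vanish; the resulting bound is the number of surviving residue discs, which is typically larger than the number of known points. This is exactly what happens in the paper's parallel computation: the 2-cover information (every $P\in C_1(\Q)$ has $[P-P_0]\in 2J_1(\Q)$) still leaves \emph{four} residue classes modulo $7$, and a Mordell--Weil sieve step combining reduction at $p=7$ and $p=13$ is needed to eliminate the two spurious classes before Coleman's one-point-per-disc bound closes the count at the two known points. Your plan has no sieve step, so even if your rank bound and Selmer computations came out as hoped, the argument as described would very likely stall at a Chabauty bound strictly larger than the number of known points.
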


 This together with the above-mentioned results of Hajdu
and Tengely completes
the proof of the following theorem.

\begin{Theorem}
There are no non-constant primitive arithmetic progressions
of the form \eqref{eqn:ap}
 with $\ell_i \in \{2,5\}$ and $k \geq 4$.
\end{Theorem}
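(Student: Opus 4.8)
The final theorem is an immediate consequence of Theorem~\ref{Thm} together with the results of Hajdu and Tengely quoted above: their work reduces any non-constant primitive progression \eqref{eqn:ap} with $\ell_i\in\{2,5\}$ and $k\ge 4$ to the single case $k=4$ with exponent pattern \eqref{eqn:l}, and by reversing the progression we may assume $(\ell_1,\ell_2,\ell_3,\ell_4)=(2,2,2,5)$. Theorem~\ref{Thm} then forces such a progression to be $(1,1,1,1)$, which is constant, a contradiction. So the substance lies entirely in Theorem~\ref{Thm}, and the plan is to prove that.

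I would first encode the progression by the two linear relations $a^2-2b^2+c^2=0$ and $b^2-2c^2+d^5=0$, and record the elementary consequences of primitivity, namely that $a,b,c,d$ are all odd. Eliminating $b^2=(a^2+c^2)/2$ turns the second relation into $2d^5=3c^2-a^2$; equivalently, writing the three-square condition via $r=(a+c)/2$, $s=(c-a)/2$, so that $r^2+s^2=b^2$, $a=r-s$, $c=r+s$, it becomes $d^5=r^2+4rs+s^2$. The quadratic form $r^2+4rs+s^2$ has discriminant $12$ and factors over $K=\Q(\sqrt{3})$, so $d^5=N_{K/\Q}(\xi)$ with $\xi=r+(2+\sqrt{3})s\in\CO_K=\Z[\sqrt{3}]$. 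Here $\CO_K$ is a principal ideal domain with fundamental unit $2+\sqrt{3}$, and the only ramified primes are those above $2$ and $3$.

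The plan is then a descent in $\CO_K$. Since $d$ is odd and $\gcd(r,s)=1$, the ideals $(\xi)$ and $(\bar\xi)$ are coprime away from the primes above $2$ and $3$, so $\xi=\epsilon\,\eta^5$ for some $\eta\in\CO_K$ and some $\epsilon$ ranging over an explicit finite set of representatives (units modulo fifth powers, adjusted by contributions of the ramified primes); local conditions at $2$, at $3$ and at the infinite place should cut this down to three admissible values of $\epsilon$. For each such $\epsilon$, expanding $\eta^5$ with $\eta=p+q\sqrt{3}$ expresses $r$ and $s$ as explicit binary quintic forms in $(p,q)$, whereupon the remaining constraint $r^2+s^2=b^2$ reads $b^2=g_\epsilon(p,q)$ for a binary form $g_\epsilon$ of degree $10$. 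Dehomogenizing produces three genus-$4$ hyperelliptic curves $C_\epsilon:Y^2=g_\epsilon(x)$, and every solution of the original problem yields a rational point on one of them, with the trivial progression $(1,1,1,1)$ corresponding to known points.

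It then remains to determine $C_\epsilon(\Q)$ for the three curves. For two of them I would run a $2$-cover (fake $2$-Selmer) descent in the style of Bruin and Stoll, hoping that the fake Selmer set comes out empty and hence that $C_\epsilon(\Q)=\emptyset$. The third curve does carry the rational points coming from $(1,1,1,1)$, so there descent alone cannot finish; instead I would compute generators of a finite-index subgroup of the Mordell--Weil group of its Jacobian and apply the Chabauty--Coleman method at a suitable prime of good reduction, bounding the number of rational points in each residue disc and matching that bound against the known points. The main obstacle is exactly this last step: Chabauty requires the Mordell--Weil rank to be at most $3$ (one less than the genus), so I must both pin down the rank and its generators --- a nontrivial point search together with a rank bound from descent --- and then control the $p$-adic integrals of the vanishing differentials precisely enough to exclude any further rational points. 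The $2$-cover descent on the first two curves is laborious but essentially algorithmic, whereas certifying the rank and carrying out the Chabauty computation on a genus-$4$ curve is where the real difficulty lies.
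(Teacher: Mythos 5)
Your handling of the stated theorem itself is exactly the paper's: the quoted results of Hajdu and Tengely reduce everything to $k=4$ with exponents \eqref{eqn:l}, reversal leaves only $(2,2,2,5)$, and Theorem~\ref{Thm} then forces the progression to be constant. The genuine divergence is in your sketch of Theorem~\ref{Thm}, specifically in how the three genus-$4$ curves arise. The paper factors the fifth-power relation over $\Q(\sqrt{2})$: from $(-d)^5=b^2-2c^2=(b+c\sqrt{2})(b-c\sqrt{2})$ it writes $b+c\sqrt{2}=(1+\sqrt{2})^j(u+v\sqrt{2})^5$ and then imposes $a^2=2b^2-c^2$; its polynomials $f_j$ split into conjugate quintics over $\Q(\sqrt{2})$. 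You instead parameterize $a^2+c^2=2b^2$ by $r,s$ with $r^2+s^2=b^2$, descend on $d^5=r^2+4rs+s^2=N_{K/\Q}(\xi)$ with $K=\Q(\sqrt{3})$, and impose $b^2=r^2+s^2$ last; your degree-$10$ forms split into conjugate quintics over $\Q(i)$. Your descent does work: $\Z[\sqrt{3}]$ has class number one and fundamental unit $2+\sqrt{3}$ of norm $1$; since $d$ is odd no prime above $2$ divides $\xi$, and the exponent of $(\sqrt{3})$ in $(\xi)$ equals $5\,\mathrm{ord}_3(d)$, so no adjustment for ramified primes is in fact needed and $\xi=(2+\sqrt{3})^j\eta^5$ with $-2\le j\le 2$, five cases exactly as in the paper. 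Both constructions satisfy the condition (conjugate quintic factors over a quadratic field) under which the fake $2$-Selmer objects are the true ones, which the paper exploits for $C_1$. So your route is a parallel, equally viable reduction --- but to genuinely different curves, so none of the paper's explicit computations transfer.

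Two corrections to your outline. First, the cut from five values of $\epsilon$ to three curves does not come from local conditions; it comes from a sign symmetry, just as $f_{-j}(x)=f_j(-x)$ does in the paper. In your normalization, $(a,c)\mapsto(-a,c)$ swaps $r$ and $s$ and sends $\xi$ to $(2+\sqrt{3})\bar{\xi}$, i.e.\ $j\mapsto 1-j$; the orbits on $j\bmod 5$ are $\{0,1\}$, $\{2,-1\}$ and $\{-2\}$, so there are three curves up to isomorphism, and the trivial progression together with its sign variants lands on the isomorphic pair $j=0,1$. In particular both $j=0$ and $j=1$ carry global points, so no local argument can discard either; without noticing the symmetry you would be obliged to run Chabauty twice. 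Second, everything after the construction is contingent on computations for your specific curves: that the curves for $j\in\{2,-1\}$ and $j=-2$ have empty $2$-Selmer sets, that the remaining curve has Mordell--Weil rank at most $3$ with provable generators of a finite-index subgroup (the paper's $C_1$ has rank $2$, certified by a $2$-descent on the Jacobian), and that Chabauty at a good prime --- possibly supplemented by a Mordell--Weil sieve step, which the paper needs at $p=13$ to eliminate two spurious residue classes modulo $7$ --- accounts for every residue class. Your outline is sound as a plan, but these verifications are the actual content of the proof.
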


The primitivity condition is crucial, since otherwise solutions
abound. Let for example $(a^2, b^2, c^2, d)$ be any arithmetic
progression whose first three terms are squares --- there are infinitely
many of these; one can take
$a = r^2 - 2rs - s^2$, $b = r^2 + s^2$, $c = r^2 + 2rs - s^2 $ ---
then $\bigl((ad^2)^2, (bd^2)^2, (cd^2)^2, d^5)$ is an arithmetic
progression
whose first three terms are squares and whose last term is a fifth power.

For the proof of Thm.~\ref{Thm},
   we first reduce the problem to that of determining
  the sets of rational points on three specific hyperelliptic curves
  of genus~4. A $2$-cover descent computation
        (following Bruin and Stoll \cite{BSTwoCoverDesc})
  shows that there are no
  rational points on two of these curves. We find generators for a
  subgroup of finite index of the Mordell-Weil group of the last curve.
  Applying Chabauty's method, we prove that
  the only rational points on this curve are the obvious ones.
All our computations are performed using the computer package
{\sf MAGMA}~\cite{MAGMA}.

The result we prove here may perhaps not be of compelling interest
in itself. Rather, the purpose of this paper is to demonstrate
how we can solve problems of this kind with the available machinery.
We review the relevant part of this machinery in Sect.~\ref{S:Back},
after we have constructed the curves pertaining to our problem in Sect.~\ref{Curves}.
Then, in Sect.~\ref{S:Points}, we apply the machinery to these curves.
The proofs are mostly computational. We have tried to make it clear
what steps need to be done, and to give enough information to make it
possible to reproduce the computations (which have been performed
independently by both authors as a consistency check).


\section{Construction of the Curves} \label{Curves}

Let $(a^2, b^2, c^2, d^5)$ be an arithmetic progression in coprime integers.
Since a square is $\equiv 0$ or $1 \bmod 4$, it follows that all terms are
$\equiv 1 \bmod 4$, in particular, $a$, $b$, $c$ and $d$ are all odd.

Considering the last three terms, we have the relation
\[ (-d)^5 = b^2 - 2 c^2 = (b + c \sqrt{2}) (b - c \sqrt{2}) \,. \]
Since $b$ and~$c$ are odd and coprime,
the two factors on the right are coprime in
$R = \Z[\sqrt{2}]$. Since $R^\times/(R^\times)^5$ is generated
by $1 + \sqrt{2}$, it follows that
\begin{equation} \label{rel1}
  b + c \sqrt{2} = (1 + \sqrt{2})^j (u + v \sqrt{2})^5
                 = g_j(u,v) + h_j(u,v) \sqrt{2}
\end{equation}
with $-2 \le j \le 2$ and $u, v \in \Z$ coprime (with $u$ odd and
$v \equiv j+1 \bmod 2$).
The polynomials $g_j$ and $h_j$ are homogeneous of degree~5 and have
coefficients in~$\Z$.

Now the first three terms of the progression give the relation
\[ a^2 = 2 b^2 - c^2 = 2 g_j(u,v)^2 - h_j(u,v)^2 \,. \]
Writing $y = a/v^5$ and $x = u/v$, this gives the equation of a hyperelliptic
curve of genus~4,
\[ C_j : y^2 = f_j(x) \]
where $f_j(x) = 2 g_j(x,1)^2 - h_j(x,1)^2$. Every arithmetic progression of
the required form therefore induces a rational point on one of the curves~$C_j$.

We observe that taking conjugates in~\eqref{rel1} leads to
\[ (-1)^j b + (-1)^{j+1} c\sqrt{2}
    = (1 + \sqrt{2})^{-j} (u + (-v) \sqrt{2})^5 \,,
\]
which implies that $f_{-j}(x) = f_j(-x)$ and therefore that $C_{-j}$ and~$C_j$
are isomorphic and their rational points correspond to the same arithmetic
progressions. We can therefore restrict attention to $C_0$, $C_1$ and $C_2$.
Their equations are as follows.
\begin{align*}
  C_0 : y^2 &= f_0(x) = 2 x^{10} + 55 x^8 + 680 x^6 + 1160 x^4 + 640 x^2 - 16 \\
  C_1 : y^2 &= f_1(x) = x^{10} + 30 x^9 + 215 x^8 + 720 x^7 + 1840 x^6 + 3024 x^5 \\
            & \qquad\qquad\qquad + 3880 x^4 + 2880 x^3 + 1520 x^2 + 480 x + 112 \\
  C_2 : y^2 &= f_2(x)
             = 14 x^{10} + 180 x^9 + 1135 x^8 + 4320 x^7 + 10760 x^6 + 18144 x^5 \\
            & \qquad\qquad\qquad + 21320 x^4 + 17280 x^3 + 9280 x^2 + 2880 x + 368
\end{align*}

The trivial solution $a = b = c = d = 1$ corresponds to $j = 1$, $(u,v) = (1,0)$
in the above and therefore gives rise to the point $\infty_+$ on~$C_1$
(this is the point at infinity where $y/x^5$ takes the value~$+1$). Changing
the signs of $a$, $b$ or $c$ leads to $\infty_- \in C_1(\Q)$ (the point where
$y/x^5 = -1$) or to the two points at infinity on the isomorphic curve~$C_{-1}$.


\section{Background on Rational Points on Hyperelliptic Curves}
\label{S:Back}

Our task will be to determine the set of rational points on each of the
curves $C_0$, $C_1$ and~$C_2$ constructed in the previous section. In this
section, we will give an overview of the methods we will use, and in the
next section, we will apply these methods to the given curves.

We will restrict attention to {\em hyperelliptic} curves, i.e., curves
given by an affine equation of the form
\[ C : y^2 = f(x) \]
where $f$ is a squarefree polynomial with integral coefficients. The smooth
projective curve birational to this affine curve has either one or two
additional points `at infinity'. If the degree of~$f$ is odd, there is one point
at infinity, which is always a rational point. Otherwise there are
two points at infinity corresponding to the two square roots of the leading
coefficient of~$f$. In particular, these two points are rational if and only
if the leading coefficient is a square. For example, $C_1$ above has two
rational points at infinity, whereas the points at infinity on $C_0$ and~$C_2$
are not rational. We will use $C$ in the following to denote the smooth
projective model; $C(\Q)$ denotes as usual the set of rational points
including those at infinity.


\subsection{Two-Cover Descent}
\label{SS:Twocov}

It will turn out that $C_0$ and~$C_2$ do not have rational points. One way
of showing that $C(\Q)$ is empty is to verify that $C(\R)$ is empty or that
$C(\Q_p)$ is empty for some prime~$p$. This does not work for $C_0$ or~$C_2$;
both curves have real points and $p$-adic points for all~$p$. (This can be
checked by a finite computation.) So we need a more sophisticated way of showing
that there are no rational points. One such method is known as {\em 2-cover descent}.
We sketch the method here; for a detailed description, see~\cite{BSTwoCoverDesc}.

An important ingredient of this and other methods is the algebra
\[ L := \Q[T] = \frac{\Q[x]}{\Q[x] \cdot f(x)} \,, \]
where $T$ denotes the image of~$x$. If $f$ is irreducible (as in our examples),
then $L$ is the number field generated by a root of~$f$. In general, $L$ will
be a product of number fields corresponding to the irreducible factors of~$f$.
We now assume that $f$ has even degree $2g+2$, where $g$ is the genus of the
curve. This is the generic case; the odd degree case is somewhat simpler.
We can then set up a map, called the {\em descent map} or {\em $x-T$ map}:
\[ x-T : C(\Q) \To H := \frac{L^\times}{\Q^\times (L^\times)^2} \,. \]
Here $L^\times$ denotes the multiplicative group of~$L$, and $(L^\times)^2$
denotes the subgroup of squares. On points $P \in C(\Q)$ that are neither
at infinity nor Weierstrass points (i.e., points with vanishing $y$ coordinate),
the map is defined as
\[ (x-T)(P) = x(P) - T \bmod \Q^\times (L^\times)^2 \,. \]
Rational points at infinity map to the trivial element, and if there are
rational Weierstrass points, their images can be determined using the fact
that the norm of $x(P) - T$ is $y(P)^2$ divided by the leading coefficient
of~$f$. If we can show that $x-T$ has empty image on~$C(\Q)$, then it
follows that $C(\Q)$ is empty.

We obtain information of the image by considering again $C(\R)$ and~$C(\Q_p)$.
We can carry out the same construction over $\R$ and over~$\Q_p$, leading
to an algebra $L_v$ ($v = p$, or $v = \infty$ when working over~$\R$),
a group~$H_v$ and a map
\[ (x-T)_v : C(\Q_v) \To H_v \qquad \text{(where $\Q_\infty = \R$).} \]
We have inclusions $C(\Q) \inj C(\Q_v)$ and canonical homomorphisms
$H \to H_v$. Everything fits together in a commutative diagram
\[ \xymatrix{ C(\Q) \ar[rr]^{x-T} \ar[d] & & H \ar[d] \\
              \prod_v C(\Q_v) \ar[rr]^{\prod_v (x-T)_v} & & \prod_v H_v
            }
\]
where $v$ runs through the primes and~$\infty$. If we can show that the
images of the lower horizontal map and of the right vertical map do not
meet, then the image of $x-T$ and therefore also $C(\Q)$ must be empty.
We can verify this by considering a finite subset of `places'~$v$.

In general, we obtain a finite subset of~$H$ that contains the image
of~$x-T$; this finite subset is known as the {\em fake 2-Selmer set}
of~$C/\Q$. It classifies either pairs of (isomorphism classes of)
2-covering curves of~$C$ that have points {\em everywhere locally},
i.e., over~$\R$ and over all~$\Q_p$,
or else it classifies such 2-covering curves, in which case it is
the (true) 2-Selmer set. Whether it classifies
pairs or individual 2-coverings depends on a certain
condition on the polynomial~$f$. This condition is satisfied if either
$f$ has an irreducible factor of odd degree, or if $\deg f \equiv 2 \bmod 4$
and $f$ factors over a quadratic extension $\Q(\sqrt{d})$ as a constant
times the product of two conjugate polynomials. A {\em 2-covering} of~$C$ is
a morphism $\pi : D \to C$ that is unramified and becomes Galois
over a suitable field extension of finite degree,
with Galois group $(\Z/2\Z)^{2g}$.
It is known that every rational point on~$C$ lifts to a rational point
on some 2-covering of~$C$.

The actual computation splits into a global and a local part. The global
computation uses the ideal class group and the unit group of~$L$ (or the
constituent number fields of~$L$) to construct a finite subgroup of~$H$
containing the image of~$x-T$. The local computation determines the
image of $(x-T)_v$ for finitely many places~$v$.


\subsection{The Jacobian}
\label{SS:Jac}

Most other methods make use of another object associated to the curve~$C$:
its {\em Jacobian variety} (or just {\em Jacobian}). This is an abelian variety~$J$
(a higher-dimensional analogue of an elliptic curve) of dimension~$g$, the
genus of~$C$. It reflects a large part of the geometry and arithmetic of~$C$;
its main advantage is that its points form an abelian group, whereas the
set of points on~$C$ does not carry a natural algebraic structure.

For our purposes, we can more or less forget the structure of~$J$ as a
projective variety. Instead we use the description of the points on~$J$
as the elements of the degree zero part of the {\em Picard group} of~$C$.
The Picard group is constructed as a quotient of the group of divisors on~$C$.
A {\em divisor} on~$C$ is an element of the free abelian group $\Div_C$ on the
set~$C(\bar\Q)$ of all algebraic points on~$C$. The absolute Galois group
of~$\Q$ acts on~$\Div_C$; a divisor that is fixed by this action is {\em rational}.
This does not mean that the points occurring in the divisor must be rational;
points with the same multiplicity can be permuted. A nonzero rational function~$h$
on~$C$ with coefficients in~$\bar\Q$ has an associated divisor $\diw(h)$ that records
its zeros and poles (with multiplicities). If $h$ has coefficients in~$\Q$,
then $\diw(h)$ is rational. The homomorphism $\deg : \Div_C \to \Z$
induced by sending each point in~$C(\bar\Q)$ to~$1$ gives the {\em degree}
of a divisor. Divisors of functions have degree zero.

Two divisors $D, D' \in \Div_C$ are {\em linearly equivalent} if their
difference is the divisor of a function. The equivalence classes are the
elements of the {\em Picard group} $\Pic_C$ defined by the following exact
sequence.
\[ 0 \To \bar\Q^\times \To \bar\Q(C)^\times \stackrel{\diw}{\To} \Div_C
     \To \Pic_C \To 0
\]
Since divisors of functions have degree zero, the degree homomorphism
descends to~$\Pic_C$. We denote its kernel by $\Pic^0_C$. It is a fact
that $J(\bar\Q)$ is isomorphic as a group to~$\Pic^0_C$. The rational
points $J(\Q)$ correspond to the elements of~$\Pic^0_C$ left invariant
by the Galois group. In general it is not true that a point in~$J(\Q)$
can be represented by a rational divisor, but this is the case when
$C$ has a rational point, or at least points everywhere locally.
The most important fact about the group $J(\Q)$ is the statement of
the {\em Mordell-Weil Theorem:} $J(\Q)$ is a {\em finitely generated}
abelian group. For this reason, $J(\Q)$ is often called the
{\em Mordell-Weil group} of $J$ or of~$C$.

If $P_0 \in C(\Q)$, then the map $C \ni P \mapsto [P - P_0] \in J$ is
a $\Q$-defined embedding of $C$ into~$J$. We use $[D]$ to denote the
linear equivalence class of the divisor~$D$. The basic idea of the
methods described below is to try to recognise the points of~$C$ embedded
in this way among the rational points on~$J$.

We need a way of representing elements of~$J(\Q)$. Let $P \mapsto P^-$
denote the {\em hyperelliptic involution} on~$C$; this is the morphism
$C \to C$ that changes the sign of the $y$~coordinate. Then it is easy
to see that the divisors $P + P^-$ all belong to the same class
$W \in \Pic_C$. An effective divisor~$D$ (a divisor such that no point occurs
with negative multiplicity) is {\em in general position} if there is
no point~$P$ such that $D - P - P^-$ is still effective. Divisors in
general position not containing points at infinity can be represented
in a convenient way by pairs of polynomials $(a(x), b(x))$. This pair
represents the divisor~$D$ such that its image on the projective line
(under the $x$-coordinate map) is given by the roots of~$a$; the corresponding
points on~$C$ are determined by the relation $y = b(x)$. The polynomials
have to satisfy the relation $f(x) \equiv b(x)^2 \bmod a(x)$. This
is the {\em Mumford representation} of~$D$. The polynomials $a$ and~$b$
can be chosen to have rational coefficients if and only if $D$ is rational.
(The representation can be adapted to allow for points at infinity occurring
in the divisor.)

If the genus $g$ is even, then it is a fact that every point in~$J(\Q)$
has a unique representation of the form $[D] - nW$ where $D$ is a rational
divisor in general position of degree~$2n$ and $n \ge 0$ is minimal.
The Mumford representation of~$D$ is then also called the Mumford representation
of the corresponding point on~$J$. It is fairly easy to add points
on~$J$ using the Mumford representation, see~\cite{Cantor}. This
addition procedure is implemented in~{\sf MAGMA}, for example.

There is a relation between 2-coverings of~$C$ and the Jacobian~$J$.
Assume $C$ is embedded in~$J$ as above. Then if $D$ is any 2-covering
of~$C$ that has a rational point~$P$, $D$ can be realised as the preimage
of~$C$ under a map of the form $Q \mapsto 2Q + Q_0$ on~$J$, where
$Q_0$ is the image of~$P$ on~$C \subset J$. A consequence of this is
that two rational points $P_1, P_2 \in C(\Q)$ lift
to the same 2-covering if and only if $[P_1 - P_2] \in 2 J(\Q)$.


\subsection{The Mordell-Weil Group}
\label{SS:MW}

We will need to know generators of a finite-index subgroup of the
Mordell-Weil group~$J(\Q)$. Since $J(\Q)$ is a finitely generated abelian
group, it will be a direct sum of a finite torsion part and a free abelian
group of rank~$r$; $r$ is called the {\em rank} of~$J(\Q)$. So what we need
is a set of $r$ independent points in~$J(\Q)$.

The torsion subgroup of~$J(\Q)$ is usually easy to determine. The main
tool used here is the fact that the torsion subgroup injects into~$J(\F_p)$
when $p$ is an odd prime not dividing the discriminant of~$f$. If the orders
of the finite groups~$J(\F_p)$ are coprime for suitable primes~$p$, then
this shows that $J(\Q)$ is torsion-free.

We can find points in~$J(\Q)$ by search. This can be done by searching
for rational points on the variety parameterising Mumford representations
of divisors of degree 2, 4, \dots. We can then check if the points found
are independent by again mapping into~$J(\F_p)$ for one or several primes~$p$.

The hard part is to know when we have found enough points. For this we need
an upper bound on the rank~$r$. This can be provided by a {\em 2-descent}
on the Jacobian~$J$. This is described in detail in~\cite{Stoll2Descent}.
The idea is similar to the 2-cover descent on~$C$ described above
in Sect.~\ref{SS:Twocov}. Essentially we extend the $x-T$ map from points
to divisors. It can be shown that the value of $(x-T)(D)$ only depends
on the linear equivalence class of~$D$.
This gives us a homomorphism from~$J(\Q)$ into~$H$, or more
precisely, into the kernel of the norm map
$N_{L/\Q} : H \to \Q^\times/(\Q^\times)^2$. It can be shown that the
kernel of this $x-T$ map on~$J(\Q)$ is either $2 J(\Q)$, or it contains
$2J(\Q)$ as a subgroup of index~2. The former is the case when $f$ satisfies
the same condition as that mentioned in Sect.~\ref{SS:Twocov}.

We can then bound $(x-T)(J(\Q))$ in much the same way as we did when
doing a 2-cover descent on~$C$. The global part of the computation is
identical. The local part is helped by the fact that we now have a
group homomorphism (or a homomorphism of $\F_2$-vector spaces), so we
can use linear algebra. We obtain a bound for the order of $J(\Q)/2J(\Q)$,
from which we can deduce a bound for the rank~$r$. If we are lucky and
found that same number of independent points in~$J(\Q)$, then we know
that these points generate a subgroup of finite index.

The group containing $(x-T)(J(\Q))$ we compute is known as the
{\em fake 2-Selmer group} of~$J$~\cite{PS}. If the polynomial~$f$ satisfies the
relevant condition, then this fake Selmer group is isomorphic to the
true 2-Selmer group of~$J$ (that classifies 2-coverings of~$J$ that
have points everywhere locally).


\subsection{The Chabauty-Coleman Method}
\label{SS:Chab}

If the rank~$r$ is less than the genus~$g$, there is a method available
that allows us to get tight bounds on the number of rational points on~$C$.
This goes back to Chabauty~\cite{Chabauty}, who used it to prove Mordell's
Conjecture in this case. Coleman~\cite{Coleman} refined the method. We
give a sketch here; more details can be found for example in~\cite{StollChabauty}.

Let $p$ be a prime of good reduction for~$C$ (this is the case when $p$
is odd and does not divide the discriminant of~$f$). We use $\Omega_C^1(\Q_p)$
and~$\Omega_J^1(\Q_p)$ to denote the spaces of regular 1-forms on~$C$ and~$J$
that are defined over~$\Q_p$. If $P_0 \in C(\Q)$ and
$\iota : C \to J$, $P \mapsto [P-P_0]$ denotes the corresponding embedding
of $C$ into~$J$, then the induced map $\iota^* : \Omega_J^1(\Q_p) \to \Omega_C^1(\Q_p)$
is an isomorphism that is independent of the choice of basepoint~$P_0$.
Both spaces have dimension~$g$. There is an integration pairing
\[ \Omega_C^1(\Q_p) \times J(\Q_p) \To \Q_p, \quad
   (\iota^* \omega, Q) \longmapsto \int_0^Q \omega = \langle \omega, \log Q \rangle \,.
\]
In the last expression, $\log Q$ denotes the $p$-adic logarithm on~$J(\Q_p)$
with values in the tangent space of~$J(\Q_p)$ at the origin, and $\Omega^1_J(\Q_p)$
is identified with the dual of this tangent space. If $r < g$, then there are
(at least) $g-r$ linearly independent differentials $\omega \in \Omega_C^1(\Q_p)$
that annihilate the Mordell-Weil group~$J(\Q)$. Such a differential can
be scaled so that it reduces to a non-zero differential $\bar\omega$ mod~$p$.
Now the important fact is that if $\bar\omega$ does not vanish at a point
$\bar{P} \in C(\F_p)$, then there is at most one rational point on~$C(\Q)$
whose reduction is~$\bar{P}$. (There are more general bounds valid when
$\bar\omega$ does vanish at~$\bar{P}$, but we do not need them here.)


\section{Determining the Rational Points} \label{S:Points}

In this section, we determine the set of rational points on the three curves
$C_0$, $C_1$ and~$C_2$. To do this, we apply the methods described in
Sect.~\ref{S:Back}.

We first consider $C_0$ and~$C_2$. We apply the
2-cover-descent procedure described in Sect.~\ref{SS:Twocov} to the
two curves and find that in each case, there are no 2-coverings that have points
everywhere locally. For $C_0$, only 2-adic information is needed in addition
to the global computation, for $C_2$, we need 2-adic and 7-adic information.
Note that the number fields generated by roots of $f_0$ or~$f_2$ are sufficiently
small in terms of degree and discriminant that the necessary class and unit
group computations can be done unconditionally. This leads to the following.

\begin{Proposition} \label{Prop02}
  There are no rational points on the curves $C_0$ and~$C_2$.
\end{Proposition}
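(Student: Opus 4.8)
The plan is to show that each of $C_0$ and~$C_2$ has no $2$-covering defined over~$\Q$ that possesses points everywhere locally; by the fact recalled in Sect.~\ref{SS:Twocov} that every rational point lifts to a rational point on some $2$-covering, the emptiness of the fake $2$-Selmer set then forces $C_j(\Q) = \emptyset$ for $j = 0, 2$. Concretely, I would compute the image of the descent map $x - T$ inside a suitable finite subset of $H = L^\times/\Q^\times(L^\times)^2$ and verify that this image is empty. Since $f_0$ and $f_2$ are each irreducible of degree~$10$, the algebra $L$ is in each case a single number field generated by a root of~$f_j$, which simplifies the bookkeeping.

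The computation splits into a global and a local part, exactly as described in Sect.~\ref{SS:Twocov}. For the global part, I would determine the class group and unit group of the degree-$10$ field $L = \Q[x]/(f_j(x))$; these control the subgroup of~$H$ that can contain the image of $x - T$, essentially because the value $x(P) - T$ must be supported (up to squares and rational scalars) on primes of bad reduction together with generators coming from units and the class group. The paper notes that for both $f_0$ and~$f_2$ the discriminants and field degrees are small enough that these class and unit group computations can be carried out \emph{unconditionally} (i.e.\ without assuming GRH), which is what makes the argument a genuine proof rather than a conditional one.

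For the local part, I would compute the images of the local maps $(x-T)_v : C_j(\Q_v) \to H_v$ and pull everything back into the commutative diagram of Sect.~\ref{SS:Twocov}. The goal is to show that the image of the global group obtained above does not meet the intersection of the local images, so that the fake $2$-Selmer set is empty. Here one only needs finitely many places: for~$C_0$ the claim is that supplementing the global data with $2$-adic information already suffices, while for~$C_2$ one needs both the $2$-adic and the $7$-adic local conditions. In practice this means enumerating the possible local behaviours at $v = 2$ (and $v = 7$ for $C_2$)---which amounts to a finite analysis of $C_j(\Q_2)$ and $C_j(\Q_7)$ and the square classes they produce in $H_v$---and checking by linear-algebra over $\F_2$ that no global class survives all the imposed local constraints.

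The main obstacle is the global number-theoretic input: computing the class group and unit group of a degree-$10$ number field is the computationally heavy and conceptually delicate step, and doing so unconditionally (rather than under GRH) requires that the discriminant be small enough for the relevant bounds to be effective. Once these invariants are in hand, assembling the finite candidate subgroup of~$H$ and intersecting with the local images is comparatively routine, being a finite $\F_2$-linear computation; I would carry it out in {\sf MAGMA} following the algorithm of Bruin and Stoll~\cite{BSTwoCoverDesc}. The only subtlety to watch is the interpretation of the fake Selmer set---whether it classifies $2$-coverings or pairs of them---but since we only need to conclude that it is \emph{empty}, this distinction does not affect the final argument.
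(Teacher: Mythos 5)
Your proposal takes essentially the same approach as the paper: a 2-cover descent in the style of Bruin and Stoll, with the global part resting on unconditional class and unit group computations for the degree-10 fields, and local conditions at $v = 2$ for $C_0$ and at $v = 2$ and $v = 7$ for $C_2$, assembled in {\sf MAGMA} to show the fake 2-Selmer set is empty. The paper's proof is exactly this computation (it additionally lists the explicit {\sf MAGMA} commands and the concrete field data --- e.g.\ $L \cong \Q(\!\sqrt[10]{288})$ for $C_0$, trivial class groups, and the sizes of the candidate sets to be ruled out locally --- so that the result can be reproduced independently), so there is nothing to correct.
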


\begin{proof}
  The 2-cover descent procedure is available in recent releases of {\sf MAGMA}.
  The computations leading to the stated result can be performed by issuing
  the following {\sf MAGMA} commands.
  
\begin{verbatim}
> SetVerbose("Selmer",2);
> TwoCoverDescent(HyperellipticCurve(Polynomial(
    [-16,0,640,0,1160,0,680,0,55,0,2])));
> TwoCoverDescent(HyperellipticCurve(Polynomial(
    [368,2880,9280,17280,21320,18144,10760,4320,1135,180,14])));
\end{verbatim}
  We explain how the results can be checked independently. We give details
  for~$C_0$ first. The procedure for~$C_2$ is similar, so we only explain
  the differences.
  
  The polynomial~$f_0$ is irreducible, and it can be checked that the number
  field generated by one of its roots is isomorphic to $L = \Q(\!\sqrt[10]{288})$.
  Using {\sf MAGMA} or pari/gp, one checks that this field has trivial
  class group. The finite subgroup~$\tilde{H}$ of~$H$ containing the Selmer~set
  is then given
  as $\CO_{L,S}^\times/(\Z_{\{2,3,5\}}^\times (\CO_{L,S}^\times)^2)$, where
  $S$ is the set of primes in~$\CO_L$ above the `bad primes' 2, 3 and~5.
  The set~$S$ contains two primes
  above~2, of degrees 1 and~4, respectively, and one prime above 3 and~5
  each, of degree~2 in both cases. Since $L$ has two real embeddings and
  four pairs of complex embeddings, the unit rank is~5. The rank (or $\F_2$-dimension)
  of~$\tilde{H}$ is then $7$. (Note that 2 is a square in~$L$.) The descent map takes
  its values in the subset of~$\tilde{H}$ consisting of elements whose norm is
  twice a square. This subset is of size~$32$; elements of~$\CO_L$ representing
  it can easily be obtained. Let $\delta$ be such a representative. We let
  $T$ be a root of~$f_0$ in~$L$ and check that the system of equations
  \[ y^2 = f_0(x), \quad x - T = \delta c z^2 \]
  has no solutions with $x,y,c \in \Q_2$, $z \in L \otimes_{\Q} \Q_2$.
  The second equation leads, after expanding $\delta z^2$ as a $\Q$-linear
  combination of $1, T, T^2, \dots, T^9$, to eight homogeneous quadratic
  equations in the ten unknown coefficients of~$z$. Any solution to these
  equations gives a unique~$x$, for which~$f_0(x)$ is a square. The latter
  follows by taking norms on both sides of $x-T = \delta c z^2$. So we
  only have to check the intersection of eight quadrics in~$\BP^9$ for
  existence of $\Q_2$-points. Alternatively, we evaluate the descent map
  on~$C_0(\Q_2)$, to get its image in~$H_2 = L_2^\times/(\Q_2^\times (L_2^\times)^2)$,
  where $L_2 = L \otimes_{\Q} \Q_2$. Then we check that none of the
  representatives~$\delta$ map into this image.

  When dealing with~$C_2$, the field~$L$ is generated by a root of
  $x^{10} - 6 x^5 - 9$. Since the leading coefficient of~$f_2$ is~$14$,
  we have to add (the primes above)~7 to the bad primes. As before, the
  class group is trivial, and we have the same splitting behaviour of
  2, 3 and~5. The prime~7 splits into two primes of degree~1 and two
  primes of degree~4. The group of $S$-units of~$L$ modulo squares has
  now rank~14, the group $\tilde{H}$ has rank~10, and the subset of~$H$ consisting
  of elements whose norm is 14~times a square has 128~elements. These elements
  now have to be tested for compatibility with the 2-adic and the 7-adic
  information, which can be done using either of the two approaches described
  above. The 7-adic check is only necessary for one of the elements; the 127~others
  are already ruled out by the 2-adic check.
\end{proof}

We cannot hope to deal with~$C_1$ in the same easy manner, since $C_1$ has
two rational points at infinity coming from the trivial solutions. We can still
perform a 2-cover-descent computation, though, and find that there is only
one 2-covering of~$C_1$ with points everywhere locally, which is the covering that
lifts the points at infinity. Only 2-adic information is necessary to show
that the fake 2-Selmer set has at most one element, so we can get this result
using the following {\sf MAGMA} command.
\begin{verbatim}
> TwoCoverDescent(HyperellipticCurve(Polynomial(
    [112,480,1520,2880,3880,3024,1840,720,215,30,1]))
      : PrimeCutoff := 2);
\end{verbatim}
(In some versions of {\sf MAGMA} this returns a two-element set. However, as can be checked
by pulling back under the map returned as a second value, these two elements
correspond to the images of $1$ and~$-1$ in $L^\times/(L^\times)^2 \Q^\times$
and therefore both represent the trivial element. The error is caused by
{\sf MAGMA} using $1$ instead of~$-1$ as a `generator' of $\Q^\times/(\Q^\times)^2$.
This bug is corrected in recent releases.)

The computation can be performed in the same way as for $C_0$ and~$C_2$.
The relevant field~$L$ is generated by a root of $x^{10} - 18 x^5 + 9$; it
has class number~1, and the primes 2, 3 and~5 split in the same way as before.
The subset~$H'$ (in fact a subgroup)
of~$\tilde{H}$ consisting of elements with square norm has size~32.
Of these, only the element represented by~1 is compatible with the 2-adic
constraints.

We remark that by the way it is given, the
polynomial~$f_1$ factors over~$\Q(\sqrt{2})$ into two conjugate factors of
degree~5. This implies that the `fake 2-Selmer set' computed by the 2-cover
descent is the true 2-Selmer set, so that there is really only one 2-covering
that corresponds to the only element of the set computed by the procedure.
We state the result as a lemma. We fix $P_0 = \infty_- \in C_1$ as our basepoint
and write $J_1$ for the Jacobian variety of~$C_1$. Then, as described
in Sect.~\ref{SS:Jac},
\[ \iota : C_1 \To J_1\,, \quad P \longmapsto [P - P_0] \]
is an embedding defined over~$\Q$.

\begin{Lemma} \label{Lemma2J}
  Let $P \in C_1(\Q)$. Then the divisor class $[P - P_0]$ is in $2 J_1(\Q)$.
\end{Lemma}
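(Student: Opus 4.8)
The plan is to connect the 2-cover descent result on $C_1$ with the structure of the Jacobian, using the dictionary between 2-coverings and the image of $C_1(\Q)$ in $J_1(\Q)/2J_1(\Q)$ that was set up in Section~\ref{SS:Jac}. The key fact I would invoke is the one stated at the end of that section: two rational points $P_1, P_2 \in C_1(\Q)$ lift to the same 2-covering of~$C_1$ if and only if $[P_1 - P_2] \in 2 J_1(\Q)$. Combined with the closing remark before the lemma --- that because $f_1$ factors over~$\Q(\sqrt{2})$ into two conjugate quintics, the fake 2-Selmer set is the \emph{true} 2-Selmer set --- this means the single element found by the \textsf{MAGMA} computation genuinely classifies a single 2-covering, not a pair of them.

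First I would record that $C_1(\Q)$ is nonempty, containing at least the basepoint $P_0 = \infty_-$ (and also $\infty_+$), so the embedding $\iota$ and the whole framework apply. Next I would argue that since there is exactly one 2-covering of~$C_1$ with points everywhere locally, and every rational point lifts to \emph{some} everywhere-locally-soluble 2-covering, all points of $C_1(\Q)$ must lift to this one and only covering. In particular, any $P \in C_1(\Q)$ and the basepoint $P_0$ lift to the same 2-covering. Applying the stated equivalence with $P_1 = P$ and $P_2 = P_0$ then gives $[P - P_0] \in 2 J_1(\Q)$ directly, which is precisely the assertion of the lemma.

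The step requiring the most care is justifying that the fake 2-Selmer set really does refine to the true 2-Selmer set here, so that ``one element of the fake Selmer set'' translates into ``one 2-covering'' rather than ``one pair of 2-coverings.'' This is exactly what the preceding paragraph establishes via the factorization condition on~$f_1$ over $\Q(\sqrt{2})$, matching the criterion quoted in Sect.~\ref{SS:Twocov}. I would therefore make explicit that this condition is what licenses the passage from the descent-map computation to a statement about a unique covering, and hence to the divisor-class conclusion. The remainder is a formal application of results already in hand, so I expect no genuine computational obstacle; the substance lies entirely in correctly chaining together the 2-cover descent output, the fake-versus-true Selmer set distinction, and the $2J_1(\Q)$ characterization of points sharing a common 2-covering.
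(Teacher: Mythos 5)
Your proposal is correct and follows essentially the same route as the paper's own proof: uniqueness of the everywhere-locally-soluble 2-covering (with the fake-versus-true Selmer set point handled by the factorization of $f_1$ over $\Q(\sqrt{2})$, which the paper records in the paragraph preceding the lemma), the fact that every rational point lifts to some 2-covering with a rational point, and the criterion from Sect.~\ref{SS:Jac} that points lifting to a common 2-covering differ by an element of $2J_1(\Q)$. Your extra care in making the fake/true Selmer distinction explicit is exactly the role that paragraph plays in the paper, so there is nothing to add.
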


\begin{proof}
  Let $D$ be the unique 2-covering of~$C_1$ (up to isomorphism) that has
  points everywhere locally. The fact that $D$ is unique follows from the
  computation of the 2-Selmer set.
  Any rational point $P \in C_1(\Q)$ lifts to a rational point on some 2-covering
  of~$C_1$. In particular, this 2-covering then has a rational point, so it
  also satisfies the weaker condition that it has points everywhere locally.
  Since $D$ is the only 2-covering of~$C_1$ satisfying this condition,
  $P_0$ and~$P$ must both lift to a rational point on~$D$. This implies
  by the remark at the end of Sect.~\ref{SS:Jac} that $[P-P_0] \in 2 J_1(\Q)$.
\end{proof}

To make use of this information, we need to know $J_1(\Q)$, or at least a
subgroup of finite index. A computer search reveals two points in~$J_1(\Q)$,
which are given in Mumford representation (see Sect.~\ref{SS:Jac}) as follows.
\begin{align*}
  Q_1 &= \bigl(x^4 + 4 x^2 + \tfrac{4}{5},\quad -16 x^3 - \tfrac{96}{5} x\bigr) \\
  Q_2 &= \bigl(x^4 + \tfrac{24}{5} x^3 + \tfrac{36}{5} x^2 + \tfrac{48}{5} x
                + \tfrac{36}{5},\quad
               -\tfrac{1712}{75} x^3 - \tfrac{976}{25} x^2 - \tfrac{1728}{25} x
                - \tfrac{2336}{25}\bigr)
\end{align*}
We note that $2 Q_1 = [\infty_+ - \infty_-]$; this makes Lemma~\ref{Lemma2J}
explicit for the known two points on~$C_1$.

\begin{Lemma} \label{LemmaGroup}
  The Mordell-Weil group $J_1(\Q)$ is torsion-free, and $Q_1$, $Q_2$ are
  linearly independent. In particular, the rank of $J_1(\Q)$ is at least~2.
\end{Lemma}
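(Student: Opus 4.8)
The plan is to prove the two assertions separately, both by reducing to computations over finite fields $\F_p$.

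For the torsion-freeness of $J_1(\Q)$, I would use the tool mentioned in Section~\ref{SS:MW}: the torsion subgroup injects into $J_1(\F_p)$ for every odd prime $p$ of good reduction (i.e.\ $p$ not dividing the discriminant of $f_1$). First I would compute the orders $\#J_1(\F_p)$ for two or three convenient small primes of good reduction. Since the torsion subgroup of $J_1(\Q)$ injects into each of these finite groups, its order must divide each $\#J_1(\F_p)$, hence must divide their greatest common divisor. If this gcd equals~$1$, then $J_1(\Q)$ is torsion-free. The computation of $\#J_1(\F_p)$ is routine: one counts points on $C_1$ over $\F_p$ and $\F_{p^2}$ and assembles the local zeta function, from which the order of the Jacobian over $\F_p$ follows.

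For the linear independence of $Q_1$ and $Q_2$, I would again reduce modulo a good prime. The reduction map $J_1(\Q) \to J_1(\F_p)$ is a group homomorphism, so it suffices to exhibit one prime~$p$ of good reduction for which the images $\bar{Q}_1, \bar{Q}_2 \in J_1(\F_p)$ are linearly independent. Concretely, I would compute the subgroup of $J_1(\F_p)$ generated by $\bar{Q}_1$ and~$\bar{Q}_2$ and check that it is isomorphic to $(\Z/m_1\Z) \times (\Z/m_2\Z)$ with two nontrivial invariant factors, or more simply verify that no relation $n_1 \bar{Q}_1 + n_2 \bar{Q}_2 = 0$ holds with $(n_1,n_2)$ ranging over a sufficiently large box determined by the orders of the $\bar{Q}_i$ in $J_1(\F_p)$. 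If $\bar{Q}_1$ and $\bar{Q}_2$ are independent in $J_1(\F_p)$, then any relation $n_1 Q_1 + n_2 Q_2 = 0$ in $J_1(\Q)$ would reduce to the same relation mod~$p$, forcing $n_1 = n_2 = 0$; hence $Q_1$ and $Q_2$ are independent. Arithmetic on $J_1(\F_p)$ in Mumford representation is provided by Cantor's algorithm as implemented in {\sf MAGMA}, so this step is entirely mechanical.

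The main obstacle is essentially bookkeeping rather than a genuine mathematical difficulty: one must select a prime~$p$ that simultaneously has good reduction (so the Mumford representations of $Q_1$ and~$Q_2$, which involve the denominators $5$, $25$, $75$, reduce well and the injectivity statements apply) and for which the independence is actually visible in $J_1(\F_p)$. A single prime may fail to certify torsion-freeness (if $\#J_1(\F_p)$ happens to share a common factor with other candidate primes) or independence (if the two points become dependent mod~$p$ by accident), so I would have a small handful of good primes ready and combine them. The final conclusion, that the rank of $J_1(\Q)$ is at least~$2$, then follows immediately: two linearly independent points in a torsion-free finitely generated abelian group generate a free subgroup of rank~$2$.
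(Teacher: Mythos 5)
Your overall strategy is exactly the paper's: torsion-freeness via coprimality of $\#J_1(\F_p)$ for two good primes (the paper uses $\#J_1(\F_7) = 2400$ and $\#J_1(\F_{41}) = 2633441$), and independence of $Q_1$, $Q_2$ via reduction modulo a single good prime (the paper uses $p = 7$). However, your justification of the independence step contains a genuine error. In a finite group such as $J_1(\F_p)$ there is no such thing as a pair of $\Z$-linearly independent elements: every element $\bar{Q}$ satisfies $N\bar{Q} = 0$ with $N$ its order, so nontrivial relations always exist. Consequently your ``more simply'' check (verify that no relation $n_1\bar{Q}_1 + n_2\bar{Q}_2 = 0$ holds in a large box) can never succeed, and the deduction ``any relation in $J_1(\Q)$ reduces to the same relation mod~$p$, forcing $n_1 = n_2 = 0$'' is invalid --- the reduced relation only constrains $n_1, n_2$ modulo the orders of $\bar{Q}_1$ and~$\bar{Q}_2$; it cannot force them to vanish.

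The correct criterion is the first one you mention, and it is the one the paper uses: check that the subgroup $\langle \bar{Q}_1, \bar{Q}_2 \rangle \subset J_1(\F_p)$ is \emph{not cyclic} (equivalently, has two nontrivial invariant factors). The deduction then goes through the structure of $\langle Q_1, Q_2 \rangle$ itself, and it needs the torsion-freeness you established in the first step: if $Q_1$ and~$Q_2$ satisfied a nontrivial relation, then $\langle Q_1, Q_2 \rangle$, being a finitely generated torsion-free abelian group of rank at most~$1$, would be cyclic, and hence so would every homomorphic image of it, including its image in~$J_1(\F_p)$. Thus a non-cyclic image certifies independence. With this repair --- replace the ``reduce the relation'' argument by the ``cyclic image'' argument and drop the box-search alternative --- your proof coincides with the paper's.
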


\begin{proof}
  The only primes of bad reduction for~$C_1$ are 2, 3 and~5. It is known that
  the torsion subgroup of $J_1(\Q)$ injects into $J_1(\F_p)$ when $p$ is an
  odd prime of good reduction. Since $\#J_1(\F_7) = 2400$ and
  $\#J_1(\F_{41}) = 2633441$
  are coprime, there can be no nontrivial torsion in~$J_1(\Q)$.
  
  We check that the image of $\langle Q_1, Q_2 \rangle$ in~$J_1(\F_7)$ is
  not cyclic. This shows that $Q_1$ and~$Q_2$ must be independent.
\end{proof}

The next step is to show that the Mordell-Weil rank is indeed~2. For this,
we compute the 2-Selmer group of~$J_1$ as sketched in Sect.~\ref{SS:MW}
and described in detail in~\cite{Stoll2Descent}.
We give some details of the computation, since it is outside the scope of
the functionality that is currently provided by {\sf MAGMA} (or any other
software package).

We first remind ourselves that $f_1$ factors over~$\Q(\sqrt{2})$. This implies
that the kernel of the $x-T$ map on~$J(\Q)$ is~$2J(\Q)$. Therefore the
`fake 2-Selmer group' that
we compute is in fact the actual 2-Selmer group of~$J_1$.
Since $J_1(\Q)$ is torsion-free, the order of the 2-Selmer group is
an upper bound for~$2^r$, where $r$ is the rank of~$J_1(\Q)$.

The global computation is the same as that we needed to do for the
\hbox{2-cover} descent. In particular, the Selmer group is contained in the
group~$H'$ from above, consisting of the $S$-units of~$L$ with
square norm, modulo squares and modulo $\{2,3,5\}$-units of~$\Q$.
For the local part of the computation,
we have to compute the image of $J_1(\Q_p)$ under the local $x-T$ map
for the primes~$p$ of bad reduction. We check that there is no 2-torsion
in $J_1(\Q_3)$ and~$J_1(\Q_5)$ ($f_1$ remains irreducible both over~$\Q_3$
and over~$\Q_5$). This implies that the targets of
the local maps $(x-T)_3$ and $(x-T)_5$ are trivial, which means that
these two primes need not be considered as bad primes for the descent
computation. The real locus $C_1(\R)$ is connected, which implies that there
is no information coming from the local image at the infinite place.
(Recall that $C_1$ denotes the smooth projective model of the curve.
The real locus of the affine curve $y^2 = f_1(x)$ has two components,
but they are connected to each other through the points at infinity.)
Therefore, we only need to use 2-adic information in the computation.
We set $L_2 = L \otimes_{\Q} \Q_2$ and compute the natural homomorphism 
\[ \mu_2 : H' \To H_2 = \frac{L_2^\times}{\Q_2^\times (L_2^\times)^2} \,. \]
Let $I_2$ be the image of~$J_1(\Q_2)$ in~$H_2$.
Then the 2-Selmer group is $\mu_2^{-1}(I_2)$.

It remains to compute~$I_2$, which is the hardest part of the computation.
The \hbox{2-torsion} subgroup $J_1(\Q_2)[2]$ has order~2 ($f_1$ splits into
factors of degrees 2 and~8 over~$\Q_2$); this implies that
$J_1(\Q_2)/2 J_1(\Q_2)$ has dimension~$g + 1 = 5$ as an \hbox{$\F_2$-vector} space.
This quotient is generated by the images of $Q_1$ and~$Q_2$ and of three
further points of the form $[D_i] - \tfrac{\deg D_i}{2} W$, where $D_i$
is the sum of  points on~$C_1$ whose $x$-coordinates are the roots of
\begin{align*}
  D_1 : & \quad \bigl(x - \tfrac{1}{2}\bigr) \bigl(x - \tfrac{1}{4}\bigr)\,, \\
  D_2 : & \quad x^2 - 2 x + 6\,, \\
  D_3 : & \quad x^4 + 4 x^3 + 12 x^2 + 36\,, \\
\end{align*}
respectively. These points were found by a systematic search, using the
fact that the local map $(x-T)_2$ is injective in our situation. We can
therefore stop the search procedure as soon as we have found points whose
images generate a five-dimensional $\F_2$-vector space.
We thus find $I_2 \subset H_2$ and then can compute the 2-Selmer group.
In our situation, $\mu_2$ is injective, and the intersection of its image
with~$I_2$ is generated by the images of $Q_1$ and~$Q_2$. Therefore,
the $\F_2$-dimension of the 2-Selmer group is~2.

\begin{Lemma} \label{LemmaRank}
  The rank of $J_1(\Q)$ is~2, and $\langle Q_1, Q_2 \rangle \subset J_1(\Q)$
  is a subgroup of finite odd index.
\end{Lemma}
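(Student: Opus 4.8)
The plan is to combine the two facts we have already assembled: the lower bound on the rank from Lemma~\ref{LemmaGroup} and the upper bound from the Selmer group computation just carried out. Lemma~\ref{LemmaGroup} establishes that $Q_1$ and~$Q_2$ are independent, so the rank $r$ of $J_1(\Q)$ is at least~$2$. The computation immediately preceding this statement shows that the $\F_2$-dimension of the 2-Selmer group of~$J_1$ is~$2$. Since we have also verified that $f_1$ factors over~$\Q(\sqrt{2})$ into two conjugate quintics, the fake 2-Selmer group coincides with the true 2-Selmer group, and the kernel of the $x-T$ map on~$J_1(\Q)$ is exactly~$2J_1(\Q)$. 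Therefore the 2-Selmer group gives a genuine upper bound for $\dim_{\F_2} J_1(\Q)/2J_1(\Q)$.

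Next I would translate the Selmer bound into a rank bound. Because $J_1(\Q)$ is torsion-free by Lemma~\ref{LemmaGroup}, the quotient $J_1(\Q)/2J_1(\Q)$ is an $\F_2$-vector space of dimension exactly~$r$. The containment $J_1(\Q)/2J_1(\Q) \hookrightarrow \mathrm{Sel}^{(2)}(J_1)$ then yields $r \le 2$. Combined with the lower bound $r \ge 2$, this forces $r = 2$, which is the first assertion of the Lemma.

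For the second assertion, I would argue that $\langle Q_1, Q_2\rangle$ has finite index. Since $J_1(\Q)$ is free of rank~$2$ and $Q_1, Q_2$ are two independent elements, the subgroup they generate is automatically of finite index; it remains only to see that this index is odd. For this I would examine the images of $Q_1$ and~$Q_2$ in $J_1(\Q)/2J_1(\Q)$. The Selmer computation shows that the intersection of the image of~$\mu_2$ with~$I_2$ is generated precisely by the images of $Q_1$ and~$Q_2$, and since $\mu_2$ is injective, these images span the full two-dimensional 2-Selmer group, hence span $J_1(\Q)/2J_1(\Q)$. Thus $Q_1$ and~$Q_2$ already generate $J_1(\Q)$ modulo~$2$, so the index $[J_1(\Q) : \langle Q_1, Q_2\rangle]$ is coprime to~$2$, i.e.\ odd.

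The conceptual content is essentially complete once the two bounds are in place; the main obstacle is really the verification underlying the Selmer computation rather than this assembly step. In particular, the delicate point is establishing that $Q_1, Q_2$ surject onto $J_1(\Q)/2J_1(\Q)$, which relies on the explicit determination of $I_2$ as the image of~$J_1(\Q_2)$ and on the injectivity of the local map $(x-T)_2$; both of these were secured in the preceding paragraphs. I would therefore present the proof as a short deduction, citing Lemmas~\ref{LemmaGroup} and the Selmer computation, and spelling out the odd-index claim via the spanning property modulo~$2$.
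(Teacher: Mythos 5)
Your proposal is correct and follows essentially the same route as the paper's own proof: lower bound on the rank from Lemma~\ref{LemmaGroup}, upper bound from the 2-Selmer computation (valid because the kernel of the $x-T$ map is $2J_1(\Q)$ and $J_1(\Q)$ is torsion-free), and odd index because the images of $Q_1$ and~$Q_2$ generate the full 2-Selmer group, so $\langle Q_1, Q_2\rangle$ surjects onto $J_1(\Q)/2J_1(\Q)$.
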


\begin{proof}
  The Selmer group computation shows that the rank is $\le 2$, and
  Lemma~\ref{LemmaGroup} shows that the rank is $\ge 2$. Regarding the second
  statement, it is now clear that we have a subgroup of finite index.
  The observation stated just before the lemma shows that the given subgroup
  surjects onto the 2-Selmer group under the $x-T$ map. Since the kernel
  of the $x-T$ map is~$2 J_1(\Q)$, this implies that the index is odd.
\end{proof}

Now we want to use the Chabauty-Coleman method sketched in Sect.~\ref{SS:Chab}
to show that $\infty_+$ and~$\infty_-$ are the only rational points on~$C_1$.
To keep the computations reasonably simple, we want to work at $p = 7$, which is
the smallest prime of good reduction.

For $p$ a prime of good reduction, we write $\rho_p$ for the two `reduction mod~$p$'
maps $J_1(\Q) \to J_1(\F_p)$ and $C_1(\Q) \to C_1(\F_p)$.

\begin{Lemma} \label{LemmaRed}
  Let $P \in C_1(\Q)$. Then $\rho_7(P) = \rho_7(\infty_+)$ or
  $\rho_7(P) = \rho_7(\infty_-)$.
\end{Lemma}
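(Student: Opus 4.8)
The plan is to run a Mordell-Weil sieve at $p = 7$, using the $2$-divisibility of $[P-P_0]$ provided by Lemma~\ref{Lemma2J} as the only arithmetic input. The guiding idea is that the reduction $\rho_7(P)$ of a rational point is severely constrained once we know that $[P-P_0]$ is twice an element of $J_1(\Q)$, and that this constraint can be checked by a finite computation inside the group $J_1(\F_7)$.

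First I would note that, since $7$ is a prime of good reduction, the embedding $\iota$ reduces to an embedding $\bar\iota \colon C_1(\F_7) \to J_1(\F_7)$, and reduction commutes with $\iota$, so that $\bar\iota(\rho_7(P)) = \rho_7([P-P_0])$ for every $P \in C_1(\Q)$. Feeding in Lemma~\ref{Lemma2J} gives the crucial inclusion
\[ \bar\iota(\rho_7(P)) \;\in\; \rho_7\bigl(2 J_1(\Q)\bigr) \;=\; 2\,\rho_7\bigl(J_1(\Q)\bigr) \;\subseteq\; 2\,J_1(\F_7)\,. \]
The right-hand side is computable: we know $\#J_1(\F_7) = 2400$ from Lemma~\ref{LemmaGroup}, and $2\,J_1(\F_7)$ is just the subgroup of doublings. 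The remaining steps are finite and explicit: enumerate the points of $C_1(\F_7)$; compute the abelian group $J_1(\F_7)$ and its subgroup $2\,J_1(\F_7)$; evaluate $\bar\iota$ on each point of $C_1(\F_7)$ via Mumford representations and the group law; and read off which residue classes $\bar P$ satisfy $\bar\iota(\bar P) \in 2\,J_1(\F_7)$. The assertion is that these are precisely $\rho_7(\infty_+)$ and $\rho_7(\infty_-)$. Both of these do lie in the fibre, as they must: $\bar\iota(\rho_7(\infty_-)) = 0$ because $P_0 = \infty_-$, and $\bar\iota(\rho_7(\infty_+)) = \rho_7([\infty_+ - \infty_-]) = 2\,\rho_7(Q_1)$ since $2 Q_1 = [\infty_+ - \infty_-]$.

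The main obstacle is to guarantee that the computable constraint is sharp enough to isolate exactly these two classes. The inclusion I can test most cheaply, namely $\bar\iota(\bar P) \in 2\,J_1(\F_7)$, is sound because it over-approximates the true constraint $\bar\iota(\bar P) \in 2\,\rho_7(J_1(\Q))$; if already this coarse test cuts the candidate set down to $\{\rho_7(\infty_+), \rho_7(\infty_-)\}$, the lemma follows, using nothing beyond Lemma~\ref{Lemma2J}. Should $2\,J_1(\F_7)$ prove too coarse, I would sharpen the over-approximation of $2\,\rho_7(J_1(\Q))$: since the index $[J_1(\Q):\langle Q_1,Q_2\rangle]$ is odd by Lemma~\ref{LemmaRank}, the $2$-Sylow subgroup of $\rho_7(J_1(\Q))$ already lies in $\rho_7\langle Q_1,Q_2\rangle$, so that $2\,\rho_7(J_1(\Q))$ is contained in $2\,\rho_7\langle Q_1,Q_2\rangle$ together with the (computable) odd part of $J_1(\F_7)$, a genuinely smaller yet still valid test set. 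In any event the heart of the proof is the concrete arithmetic in $J_1(\F_7)$---determining its group structure and the values of $\bar\iota$---which is a finite verification best left to the machine.
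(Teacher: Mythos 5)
Your overall strategy (exploit Lemma~\ref{Lemma2J} and compute which residue classes in $C_1(\F_7)$ land in the image of doubling) is exactly the right starting point, and it is how the paper begins. But there is a genuine gap at the decisive step: your asserted outcome of the finite computation is false. The paper carries out precisely the sharp version of your test --- using the fact that $(J_1(\Q) : \langle Q_1,Q_2\rangle)$ is odd to get $\rho_7(J_1(\Q)) = \rho_7(\langle Q_1,Q_2\rangle)$, hence $\rho_7(2J_1(\Q)) = 2\rho_7(\langle Q_1,Q_2\rangle)$ --- and finds that the set of points of $C_1(\F_7)$ mapping into this subgroup is
\[ X_7 = \{\rho_7(\infty_+),\, \rho_7(\infty_-),\, (-2,2),\, (-2,-2)\}\,, \]
i.e.\ \emph{four} classes, not two. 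Your coarser test $\bar\iota(\bar P) \in 2J_1(\F_7)$ can only be weaker (it contains $2\rho_7(\langle Q_1,Q_2\rangle)$), so it too admits $(-2,\pm 2)$. Consequently no amount of sharpening of the mod-$7$ test alone, of the kind you describe, can isolate $\{\rho_7(\infty_+),\rho_7(\infty_-)\}$; the information from Lemma~\ref{Lemma2J} reduced at the single prime $7$ is intrinsically insufficient to prove the lemma.

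What is missing is a second prime and a genuine Mordell--Weil sieve step, which is how the paper finishes. At $p=13$ one checks (again using that the index of $G = \langle Q_1,Q_2\rangle$ is odd and, from the mod-$7$ computation, not divisible by $5$) that $\rho_{13}(G) = \rho_{13}(J_1(\Q))$, and that the analogous set is $X_{13} = \{\rho_{13}(\infty_+),\rho_{13}(\infty_-)\}$. Then, for a hypothetical $P \in C_1(\Q)$ with $\rho_7(P) \in \{(-2,2),(-2,-2)\}$, the point $\iota(P)$ would lie in two specific cosets of $\ker\rho_7|_G$ and simultaneously in two specific cosets of $\ker\rho_{13}|_G$; identifying $G \cong \Z^2$ and computing these kernels explicitly, one checks the coset unions are disjoint, so no such $P$ exists. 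Your proposal never confronts the classes $(-2,\pm2)$, so as written it proves only the weaker statement $\rho_7(P) \in X_7$, not the lemma.
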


\begin{proof}
  Let $G = \langle Q_1, Q_2 \rangle$ be the subgroup of $J_1(\Q)$ generated
  by the two points $Q_1$ and~$Q_2$. We find that $\rho_7(G)$ has index~2
  in~$J_1(\F_7) \cong \Z/10\Z \oplus \Z/240\Z$.
  By Lemma~\ref{LemmaRank}, we know that $(J_1(\Q) : G)$ is
  odd, so we can deduce that $\rho_7(G) = \rho_7(J_1(\Q))$. The group
  $J_1(\F_7)$ surjects onto $(\Z/5\Z)^2$. Since $\rho_7(J_1(G))$ has index~2
  in~$J_1(\F_7)$, $\rho_7(G) = \rho_7(J_1(\Q))$ also surjects onto~$(\Z/5\Z)^2$. This
  implies that the index of~$G$ in~$J_1(\Q)$ is not divisible by~5.
  
  We determine the points $P \in C_1(\F_7)$ such that
  $\iota(P) \in \rho_7(2 J_1(\Q)) = 2 \rho_7(G)$. We find the set
  \[ X_7 = \{\rho_7(\infty_+), \rho_7(\infty_-), (-2, 2), (-2, -2)\}\,. \]
  Note that for any $P \in J_1(\Q)$, we must have $\rho_7(P) \in X_7$
  by Lemma~\ref{Lemma2J}.
  
  Now we look at $p = 13$. The image of~$G$
  in~$J_1(\F_{13}) \cong \Z/10\Z \oplus \Z/2850\Z$ has index~5.
  Since we already know that $(J_1(\Q) : G)$ is not a multiple of~5, this
  implies that $\rho_{13}(G) = \rho_{13}(J_1(\Q))$. As above for $p = 7$,
  we compute the set $X_{13} \subset C_1(\F_{13})$ of points mapping into
  $\rho_{13}(2 J_1(\Q))$. We find
  \[ X_{13} = \{\rho_{13}(\infty_+), \rho_{13}(\infty_-)\} \,. \]
  
  Now suppose that there is $P \in C_1(\Q)$ with
  $\rho_7(P) \in \{(-2,2), (-2,-2)\}$. Then $\iota(P)$
  is in one of two specific cosets in
  $J_1(\Q)/\ker \rho_7 \cong G/\ker \rho_7|_G$. On the other hand,
  we have $\rho_{13}(P) = \rho_{13}(\infty_\pm)$, so that $\iota(P)$
  is in one of two specific cosets in
  $J_1(\Q)/\ker \rho_{13} \cong G/\ker \rho_{13}|_G$.
  If we identify $G = \langle Q_1, Q_2 \rangle$ with $\Z^2$, then we can
  find the kernels of $\rho_7$ and of~$\rho_{13}$ on~$G$ explicitly, and
  we can also determine the relevant cosets explicitly. It can then be checked
  that the union of the first two cosets does not meet the union of the
  second two cosets. This implies that such a point $P$ cannot exist.
  Therefore, the only remaining possibilities are that
  $\rho_7(P) = \rho_7(\infty_\pm)$.
\end{proof}

\begin{Remark}
  The use of information at $p = 13$ to rule out residue classes at $p = 7$
  in the proof above is a very simple instance of a method known as the
  {\em Mordell-Weil sieve}. For a detailed description of this method,
  see~\cite{BSMWS}.
\end{Remark}

Now we need to find the space of holomorphic 1-forms on~$C_1$, defined over~$\Q_7$,
that annihilate the Mordell-Weil group under the integration pairing,
compare Sect.~\ref{SS:Chab}.
We follow the procedure described in~\cite{StollXdyn06}. We first find
two independent points in the intersection of $J_1(\Q)$ and the kernel
of reduction mod~7. In our case, we take $R_1 = 20 Q_1$ and
$R_2 = 5 Q_1 + 60 Q_2$. We represent these points in the form
$R_j = [D_j - 4 \infty_-]$ with effective divisors $D_1$, $D_2$ of
degree~4. The coefficients of the primitive polynomial in~$\Z[x]$ whose
roots are the $x$-coordinates of the points in the support of~$D_1$
have more than~100 digits and those of the corresponding polynomial
for~$D_2$ fill several pages, so we refrain from printing them here.
(This indicates that it is a good idea to work with a small prime!)
The points in the support of $D_1$ and~$D_2$ all reduce
to~$\infty_-$ modulo the prime above~7 in their fields of definition
(which are degree~4 number fields totally ramified at~7). Expressing
a basis of $\Omega^1_{C_1}(\Q_7)$ as power series in the uniformiser
$t = 1/x$ at~$P_0 = \infty_-$ times~$dt$, we compute the integrals numerically.
More precisely, the differentials
\[ \eta_0 = \frac{dx}{2y}, \quad \eta_1 = \frac{x\,dx}{2y}, \quad
   \eta_2 = \frac{x^2\,dx}{2y} \quad\text{and}\quad \eta_3 = \frac{x^3\,dx}{2y}
\]
form a basis of~$\Omega_{C_1}^1(\Q_7)$. We get
\[ \eta_j = t^{3-j} \Bigl(\frac{1}{2} - \frac{15}{2} t + 115 t^2 - 1980 t^3
                           + \frac{145385}{4} t^4 - \frac{2764899}{4} t^5 + \dots\Bigr)
                           \,dt
\]
as power series in the uniformiser. Using these power series up to a precision
of~$t^{20}$, we compute the following 7-adic approximations to the integrals.
\[ \Bigl(\int_0^{R_j} \eta_i\Bigr)_{0 \le i \le 3, 1 \le j \le 2}
    = \begin{pmatrix}
       -20 \cdot 7 + O(7^4)  & -155 \cdot 7 + O(7^4) \\
       -150 \cdot 7 + O(7^4) & -13 \cdot 7 + O(7^4) \\
       -130 \cdot 7 + O(7^4) & -83 \cdot 7 + O(7^4) \\
       -19 \cdot 7 + O(7^4)  & 163 \cdot 7 + O(7^4)
      \end{pmatrix}
\]
From this, it follows easily that the reductions mod~7 of the (suitably scaled)
differentials that kill~$J_1(\Q)$ fill the subspace of $\Omega^1_{C_1}(\F_7)$
spanned by
\[ \omega_1 = (1 + 3 x - 2 x^2) \frac{dx}{2 y} \quad\text{and}\quad
   \omega_2 = (1 - x^2 + x^3) \frac{dx}{2 y} \,.
\]
Since $\omega_2$ does not vanish at the points $\rho_7(\infty_\pm)$, this
implies that there can be at most one rational point~$P$ on~$C_1$ with
$\rho_7(P) = \rho_7(\infty_+)$ and at most one point~$P$ with
$\rho_7(P) = \rho_7(\infty_-)$ (see for example~\cite[Prop.~6.3]{StollChabauty}).

\begin{Proposition} \label{Prop1}
  The only rational points on~$C_1$ are $\infty_+$ and~$\infty_-$.
\end{Proposition}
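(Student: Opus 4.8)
The plan is to combine the residue-class restriction of Lemma~\ref{LemmaRed} with the Chabauty--Coleman bound recalled in Sect.~\ref{SS:Chab}. Since Lemma~\ref{LemmaRank} gives that the Mordell--Weil rank $r = 2$ is strictly smaller than the genus $g = 4$, the method applies, and the explicit $7$-adic integration carried out just above produces (reductions of) annihilating differentials filling the span of $\omega_1$ and $\omega_2$, where the decisive feature is that $\omega_2$ does not vanish at either $\rho_7(\infty_+)$ or $\rho_7(\infty_-)$.

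First I would apply Lemma~\ref{LemmaRed} to conclude that every $P \in C_1(\Q)$ satisfies $\rho_7(P) \in \{\rho_7(\infty_+), \rho_7(\infty_-)\}$, so it suffices to bound the rational points reducing into these two residue discs. Next, by the key fact stated at the end of Sect.~\ref{SS:Chab}, an annihilating differential that is nonvanishing at a point of $C_1(\F_7)$ forces at most one rational point reducing to that point; applying this with $\omega_2$ at the classes $\rho_7(\infty_+)$ and $\rho_7(\infty_-)$ yields at most one rational point in each of the two relevant discs. Finally, since $7$ is a prime of good reduction and the leading coefficient of $f_1$ is a square, the points $\infty_+$ and $\infty_-$ are rational and reduce to the two distinct points at infinity of $C_1(\F_7)$; thus each of the two admissible residue classes already contains a known rational point, and the Chabauty bound rules out any further ones. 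Hence $C_1(\Q) = \{\infty_+, \infty_-\}$.

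The substantive work has of course already been done: the hard part is not this final assembly but the construction of its inputs --- the rank bound $r \le 2$ from the $2$-descent behind Lemma~\ref{LemmaRank}, the membership $[P - P_0] \in 2 J_1(\Q)$ of Lemma~\ref{Lemma2J} that feeds the Mordell--Weil sieve in Lemma~\ref{LemmaRed}, and the $p$-adic integration that pins down the killing differentials. The one subtlety I would flag and verify explicitly is that $\rho_7(\infty_+) \neq \rho_7(\infty_-)$, since otherwise a single Chabauty point in a shared disc could not simultaneously account for both known rational points; here the two square roots $\pm 1$ of the leading coefficient remain distinct modulo~$7$, so the two residue classes are genuinely different and the counting argument closes.
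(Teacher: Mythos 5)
Your proof is correct and takes essentially the same route as the paper: Lemma~\ref{LemmaRed} confines every rational point to the two mod-$7$ residue discs of $\infty_+$ and $\infty_-$, the nonvanishing of $\omega_2$ at $\rho_7(\infty_\pm)$ gives at most one rational point per disc, and the two known points then exhaust $C_1(\Q)$. Your added verification that $\rho_7(\infty_+) \neq \rho_7(\infty_-)$ is correct (the values $\pm 1$ of $y/x^5$ remain distinct in $\F_7$) but not strictly required for the counting: the union of two sets each of size at most one has size at most two, and the two known distinct rational points already fill that bound.
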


\begin{proof}
  Let $P \in C_1(\Q)$. By Lemma~\ref{LemmaRed}, $\rho_7(P) = \rho_7(\infty_\pm)$.
  By the argument above, for each sign $s \in \{+,-\}$, we have
  $\#\{P \in C_1(\Q) : \rho_7(P) = \rho_7(\infty_s)\} \le 1$. These two
  facts together imply that $\#C_1(\Q) \le 2$. Since we know the two rational
  points $\infty_+$ and~$\infty_-$ on~$C_1$, there cannot be any further
  rational points.
\end{proof}

We can now prove Thm.~\ref{Thm}.

\begin{proof}[of Thm.~\ref{Thm}]
  The considerations
  in Sect.~\ref{Curves} imply that if $(a^2, b^2, c^2, d^5)$ is an
  arithmetic progression in coprime integers, then there are coprime $u$ and~$v$,
  related to $a,b,c,d$ by~\eqref{rel1},
  such that $(u/v, a/v^5)$ is a rational point on one of the curves~$C_j$
  with $-2 \le j \le 2$. By Prop.~\ref{Prop02}, there are no rational
  points on $C_0$ and~$C_2$ and therefore also not on the curve~$C_{-2}$, which
  is isomorphic to~$C_2$. By Prop.~\ref{Prop1}, the only rational points
  on~$C_1$ (and~$C_{-1}$) are the points at infinity. This translates into
  $a = \pm 1$, $u = \pm 1$, $v = 0$, and we have $j = \pm 1$. We deduce
  $a^2 = 1$, $b^2 = g_1(\pm 1, 0)^2 = 1$, whence also $c^2 = d^5 = 1$.
\end{proof}



\begin{thebibliography}{MM}

\frenchspacing
\renewcommand{\baselinestretch}{1}

\bibitem{MAGMA}
  {\rm W. Bosma, J. Cannon} and {\rm C. Playoust},
  {\it The Magma Algebra System I: The User Language},
  J. Symb. Comp. {\bf 24} (1997), 235--265. \\
  (See also {\tt http://magma.maths.usyd.edu.au/magma})

\bibitem{BGHT}
  {\rm N. Bruin, K. Gy\H{o}ry, L. Hajdu} and {Sz. Tengely},
  {\it Arithmetic progressions consisting of unlike powers},
  Indag. Math. {\bf 17} (2006), 539--555.

\bibitem{BSTwoCoverDesc}
  {\rm N. Bruin} and {\rm M. Stoll},
  {\it 2-cover descent on hyperelliptic curves},
  Math. Comp. {\bf 78} (2009), 2347--2370.

\bibitem{BSMWS}
  {\rm N. Bruin} and {\rm M. Stoll},
  {\it The Mordell-Weil sieve: Proving non-existence of rational points on curves},
  to appear in LMS J. Comput. Math. \\
  arXiv:0906.1934v2 [math.NT]

\bibitem{Cantor}
  {\rm D.G. Cantor,},
  {\it Computing in the Jacobian of a hyperelliptic curve},
  Math. Comp. {\bf 48} (1987), 95--101.

\bibitem{Chabauty}
  {\rm C. Chabauty},
  {\it Sur les points rationnels des courbes alg\'ebriques de genre sup\'erieur 
   \`a l'unit\'e} (French),
  C. R. Acad. Sci. Paris {\bf 212} (1941), 882--885.

\bibitem{Coleman}
  {\rm R.F. Coleman}, {\it Effective Chabauty}, Duke Math. J. {\bf 52} (1985),
  765--770.

\bibitem{DM}
  {\rm H. Darmon} and {\rm L. Merel},
  {\it Winding quotients and some variants of
  Fermat's last theorem},
  J. reine angew. Math. {\bf 490} (1997), 81--100.

\bibitem{Dickson}
  {\rm L. E. Dickson},
  {\it History of the theory of numbers. Vol. II:
  Diophantine analysis},
  Chelsea Publishing Co., New York, 1966.

\bibitem{LeidenProblem}
  {\rm J.-H. Evertse} and {\rm R. Tijdeman},
  {\it Some open problems about Diophantine equations}
  from a workshop in Leiden in May~2007, see \\
  \verb+http://www.math.leidenuniv.nl/~evertse/07-workshop-problems.pdf+

\bibitem{Hajdu}
  {\rm L. Hajdu},
  {\it Perfect powers in arithmetic progression.
  A note on the inhomogeneous case.}
  Acta Arith. {\bf 113} (2004), 343--349.

\bibitem{HT}
  {\rm L. Hajdu} and {\rm Sz. Tengely},
  {\it Arithmetic progressions of squares,
  cubes and $n$-th powers},
  Funct. Approx. Comment. Math. {\bf 41} (2009), 129--138.

\bibitem{PS}
  {\rm B. Poonen} and {\rm E.F. Schaefer},
  {\it Explicit descent for Jacobians of cyclic covers of the projective line},
  J. reine angew. Math. {\bf 488} (1997), 141--188.

\bibitem{StollXdyn06}
  {\rm M. Stoll},
  {\it Rational 6-cycles under iteration of quadratic polynomials},
  LMS J.~Comput.~Math. {\bf 11} (2008), 367--380.

\bibitem{StollChabauty}
  {\rm M. Stoll},
  {\it Independence of rational points on twists of a given curve},
  Compositio Math. {\bf 142} (2006), 1201--1214.

\bibitem{Stoll2Descent}
  {\rm M. Stoll},
  {\it Implementing 2-descent for Jacobians of hyperelliptic curves},
  Acta Arith. {\bf 98} (2001), 245--277.

\end{thebibliography}
\end{document}